\theoremstyle{theorem}
\newtheorem{theorem}{Theorem}
\newcommand{\N}{\mathbf{N}}
\DeclareMathOperator{\E}{\mathbf{E}}
\DeclareMathOperator{\p}{\mathbf{P}}
\DeclareMathOperator{\I}{\mathbf{1}}
\newcommand{\eps}{\epsilon}
\begin{document}

\title{A Gambler that Bets Forever and the Strong Law of Large Numbers}
\markright{Gambling and the Law of Large Numbers}
\author{Calvin Wooyoung Chin}
\date{\today}

\begin{abstract}
In this expository note, we give a simple proof that a gambler repeating a game
with positive expected value never goes broke with a positive probability.
This does not immediately follow from the strong law of large numbers or
other basic facts on random walks.
Using this result, we provide an elementary proof of
the strong law of large numbers.
The ideas of the proofs come from the maximal ergodic theorem
and Birkhoff's ergodic theorem.
\end{abstract}

\maketitle

A gambler that makes the same bet on a single number over and over
in a game of American roulette is bound to go broke with probability $1$.
This has to do with the fact that the house has an edge of 5.26\% in average,
and it turns out that the same is true for any other game with negative
expected value.
This general fact, sometimes called \emph{gambler's ruin},
is a direct consequence of the strong law of large numbers.
To see this, let $X_1,X_2,\ldots$ be i.i.d.\ (real-valued) random variables
with finite mean, representing the gains from the games.
For each $n \in \N$, write $S_n := X_1+\cdots+X_n$, denoting
the capital of the gambler after the $n$-th game.
The strong law of large numbers says that $S_n/n \to \E X_1$ with
probability $1$. If $\E X_1 < 0$, then $S_n/n$ takes a negative value
at some point with probability $1$, and thus so does $S_n$.
This proves gambler's ruin.

What if the game favors the gambler in the sense that $\E X_1 > 0$?
Is it possible that the game continues forever without the gambler going broke?
Mathematically, do we have
\begin{equation} \label{eq:pos_prob}
\p(S_n > 0 \text{ for all $n\in\N$}) > 0?
\end{equation}
An argument similar to the one given above shows that $S_n$ stays positive
after some point with probability $1$.
However, this does not exclude the possibility of having $S_n < 0$
before $S_n$ eventually remains positive.

If $X_1$ only takes $\pm 1$ as values, a classical approach
\cite[Section 6.2]{Ash70}
involving difference equations yields \eqref{eq:pos_prob}.
More generally, if $X_1$ is bounded and $\p(X_1<0)>0$, then we can use the fact
that $(e^{r S_n})_{n\in\N}$ is a martingale for some $r < 0$ to
obtain~\eqref{eq:pos_prob}; see \cite[Theorem 4.8.9]{Dur19}.
However, these methods do not easily generalize to arbitrary $X_1$
with finite mean.

In this expository note, we first illustrate a proof of
\eqref{eq:pos_prob} for arbitrary $X_1$ with positive mean.
The idea behind the proof is to observe that we need to show
the so-called \emph{maximal ergodic theorem} \cite[Lemma 6.2.2]{Dur19}.

We then use our result to prove the strong law of large numbers.
The idea comes from the proof of \emph{Birkhoff's ergodic theorem}
\cite[Theorem 6.2.1]{Dur19}, but our proof is rather elementary.
In particular, we do not explicitly invoke advanced theorems, and
we avoid using notions such as limit superior.
Given that we obtain an interesting theorem on gambling along the way,
our exposition offers a clear and efficient approach to the strong law
of large numbers.

Let us start by proving \eqref{eq:pos_prob} for arbitrary $X_1$
with positive mean.

\begin{theorem} \label{thm:eternal}
If $\E[X_1] > 0$, then $\p(S_n>0\text{ for all $n\in\N$}) > 0$.
\end{theorem}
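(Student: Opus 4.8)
The plan is to derive the theorem from the i.i.d.\ form of the maximal inequality underlying the maximal ergodic theorem, applied to the increments $\delta - X_i$ for a suitably small constant $\delta > 0$, and then to pass to the limit. Recall that $X_1$ is integrable. Fix $\delta$ with $0 < \delta < \E[X_1]$ and put $Z_i := \delta - X_i$, so that the $Z_i$ are i.i.d., integrable, and $\E[Z_1] < 0$. Writing $T_k := Z_1 + \cdots + Z_k = k\delta - S_k$, it suffices to bound $\p(T_k \le 0 \text{ for all } k \ge 1)$ below by a positive number, because
\[
\{T_k \le 0 \text{ for all } k \ge 1\} \;=\; \{S_k \ge k\delta \text{ for all } k \ge 1\} \;\subseteq\; \{S_k > 0 \text{ for all } k \ge 1\}.
\]

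The crux is the maximal inequality
\[
\E\bigl[Z_1 \,;\, \max\nolimits_{1 \le k \le n} T_k > 0\bigr] \;\ge\; 0 \qquad \text{for every } n \in \N,
\]
which is the i.i.d.\ incarnation of the maximal ergodic argument. To prove it, set $W_n := \max(0, T_1, \dots, T_n)$ and let $W'_n$ be the analogous quantity built from the shifted sums $Z_2,\; Z_2+Z_3,\; \dots,\; Z_2+\cdots+Z_{n+1}$; then $W'_n$ is independent of $Z_1$, has the same law as $W_n$, and both are nonnegative and integrable. For $1 \le k \le n$ we have $T_k = Z_1 + (Z_2 + \cdots + Z_k)$, and the bracketed sum (empty when $k = 1$) is one of the partial sums that are $\le W'_n$, so $\max_{1\le k\le n} T_k \le Z_1 + W'_n$. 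On $\{W_n > 0\}$ one has $W_n = \max_{1\le k\le n} T_k$ (the leading $0$ being irrelevant), hence $W_n \le Z_1 + W'_n$ there, i.e.\ $Z_1 \ge W_n - W'_n$. Integrating over $\{W_n > 0\}$, and using that $W_n$ vanishes off this event while $W'_n \ge 0$, yields $\E[Z_1 \,;\, W_n > 0] \ge \E[W_n] - \E[W'_n] = 0$.

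It remains to translate the inequality and let $n \to \infty$. Since $\{\max_{1\le k\le n} T_k > 0\} = \{S_k < k\delta \text{ for some } k \le n\}$ and $Z_1 = \delta - X_1$, the maximal inequality rearranges to $\E[X_1 - \delta \,;\, S_k \ge k\delta \text{ for all } k \le n] \ge \E[X_1] - \delta > 0$. As $n \to \infty$ these events decrease to $C := \{S_k \ge k\delta \text{ for all } k \ge 1\}$, so dominated convergence (with $|X_1| + \delta$ as an integrable envelope) gives $\E[X_1 - \delta \,;\, C] \ge \E[X_1] - \delta > 0$; in particular $\p(C) > 0$, and since $C \subseteq \{S_k > 0 \text{ for all } k \ge 1\}$ we are done.

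I expect the maximal inequality of the second paragraph to be the only genuine obstacle. The shifted sums must be set up so that the comparison $T_k \le Z_1 + W'_n$ holds for \emph{every} $k$ with $1 \le k \le n$, and the real point is that the extra $0$ in $W_n = \max(0, T_1, \dots, T_n)$ is irrelevant exactly on $\{W_n > 0\}$, which is what makes the telescoping $\E[W_n] - \E[W'_n] = 0$ valid. The tilt by $\delta$ is the small device that turns ``stays $\ge 0$'' into ``stays $> 0$'', so that one avoids having to separately upgrade a non-strict conclusion; the limiting step and the final inclusion are routine.
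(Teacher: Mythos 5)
Your proof is correct and runs on the same engine as the paper's: the maximal ergodic inequality for i.i.d.\ partial sums, established by comparing the running maximum with an independent, identically distributed shifted copy ($W_n$ versus $W'_n$) and using $\E[W_n]=\E[W'_n]$, which is exactly the paper's pointwise claim \eqref{eq:birkhoff_claim} plus expectation, written with maxima instead of minima. The only real difference is the $\delta$-tilt: the paper avoids it by proving the inequality for the non-strict event $\{S_1\wedge\cdots\wedge S_n\le 0\}$, whose complement is already $\{S_k>0\text{ for all }k\le n\}$, while you obtain strictness by working with $Z_i=\delta-X_i$ and the event $\{S_k\ge k\delta\text{ for all }k\}$ -- both routes are equally valid.
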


\begin{proof}
Let us write $a_1\wedge\cdots\wedge a_n$ to denote the minimum
among $a_1,\ldots,a_n$.
For each $n \in \N$, we claim the following:
\begin{equation} \label{eq:birkhoff_claim}
0 \wedge S_1 \wedge \cdots \wedge S_n \ge
X_1\I_{\{S_1\wedge\cdots\wedge S_n \le 0\}}
+ 0 \wedge X_2 \wedge \cdots \wedge (X_2+\cdots+X_n).
\end{equation}
If $S_1\wedge\cdots\wedge S_n\le 0$, then the equality holds;
see Figure \ref{fig}.
If $S_1\wedge\cdots\wedge S_n > 0$, then the left side is $0$,
while the right side is nonpositive. Thus, \eqref{eq:birkhoff_claim} is proved.
\begin{figure}[h]
\includegraphics[width=\textwidth]{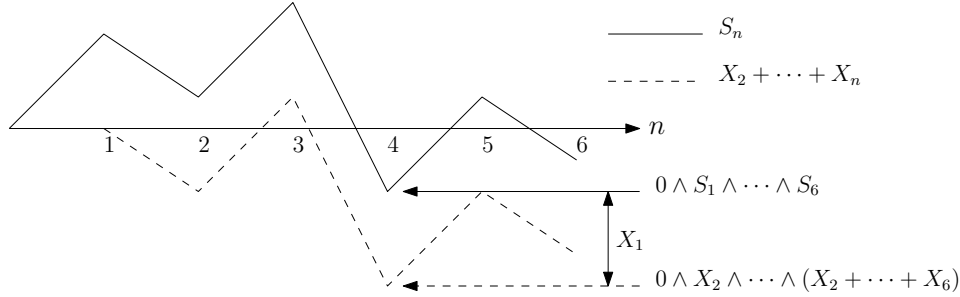}
\caption{Illustration of \eqref{eq:birkhoff_claim} when
$S_1\wedge\cdots\wedge S_n \le 0$.}
\label{fig}
\end{figure}

Since
\[
\begin{split}
\E[0\wedge X_2\wedge\cdots\wedge (X_2+\cdots+X_n)]
&= \E[0\wedge S_1\wedge\cdots\wedge S_{n-1}] \\
&\ge \E[0\wedge S_1\wedge\cdots\wedge S_n],
\end{split}
\]
taking the expectation of both sides of \eqref{eq:birkhoff_claim} yields
\[ \E[X_1\I_{\{S_1\wedge\cdots\wedge S_n \le 0\}}] \le 0. \]
Letting $n \to \infty$, we have
\[ \E[X_1\I_{\{S_n\le0\text{ for some }n\in\N\}}] \le 0 \]
(by dominated convergence).
Since $\E[X_1]>0$, we have
\[ \p(S_n\le0\text{ for some }n\in\N) < 1, \]
which is equivalent to the desired conclusion.
\end{proof}

We continue on to prove the strong law of large numbers.

\begin{theorem}[strong law of large numbers]
We have $S_n/n \to \E[X_1]$ with probability $1$.
\end{theorem}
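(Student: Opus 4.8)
The plan is to obtain the strong law from Theorem~\ref{thm:eternal} by proving that a random walk with negative drift almost surely stays bounded above, and hence eventually lies below any line whose slope exceeds its drift. Write $\mu:=\E[X_1]$. It suffices to show that for every $\eps>0$,
\[ \p\bigl(S_n<n(\mu+\eps)\text{ for all but finitely many }n\bigr)=1; \]
applying this to the i.i.d.\ sequence $(-X_i)_{i\in\N}$ gives $S_n>n(\mu-\eps)$ eventually with probability $1$, so for each $\eps$ we have $|S_n/n-\mu|<\eps$ for all but finitely many $n$ almost surely, and intersecting these events over $\eps=1/k$, $k\in\N$, yields $S_n/n\to\mu$ with probability $1$.

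To prove the displayed claim, fix $\eps>0$ and set $T_n:=S_n-n(\mu+\eps/2)=\sum_{i=1}^n(X_i-\mu-\eps/2)$, a random walk whose increments are i.i.d.\ with mean $-\eps/2<0$. The key step is:
\[ M:=\sup_{n\ge0}T_n<\infty\quad\text{with probability }1. \]
Granting this, $S_n-n(\mu+\eps)=T_n-n\eps/2\le M-n\eps/2\to-\infty$ almost surely, so $S_n<n(\mu+\eps)$ for all large $n$ with probability $1$, which is exactly what we want.

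For the key step, apply Theorem~\ref{thm:eternal} to the i.i.d.\ sequence $\bigl(-(X_i-\mu-\eps/2)\bigr)_{i\in\N}$, whose common mean is $\eps/2>0$: the event $\{-T_n>0\text{ for all }n\in\N\}$ has positive probability, so
\[ \theta:=\p\bigl(T_n>0\text{ for some }n\in\N\bigr)\le 1-\p\bigl(-T_n>0\text{ for all }n\in\N\bigr)<1. \]
Define the ladder epochs $\ell_0:=0$ and $\ell_{i+1}:=\inf\{n>\ell_i:T_n>T_{\ell_i}\}$, with $\inf\emptyset:=\infty$. Each $\ell_i$ is a stopping time, and the increments of $(T_n)$ after a finite stopping time form a fresh independent copy of the increment sequence; hence on $\{\ell_i<\infty\}$ the event $\{\ell_{i+1}<\infty\}$ is the event that this fresh copy ever exceeds $0$, which has probability $\theta$ and is independent of $\{\ell_i<\infty\}$. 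Therefore $\p(\ell_{i+1}<\infty)=\theta\,\p(\ell_i<\infty)$, so $\p(\ell_i<\infty)=\theta^i$. A walk with $M=\infty$ must set a new record infinitely often, so $\{M=\infty\}\subseteq\bigcap_i\{\ell_i<\infty\}$, and thus $\p(M=\infty)\le\lim_{i\to\infty}\theta^i=0$.

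The reduction to one side and the concluding estimate are routine bookkeeping; the one place that needs an idea---and where the positive-probability conclusion of Theorem~\ref{thm:eternal} gets amplified to an almost-sure statement---is the bound $M<\infty$ a.s., which I would obtain through the ladder-epoch decomposition together with the geometric estimate $\p(\ell_i<\infty)=\theta^i$ with $\theta<1$.
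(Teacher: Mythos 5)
Your proposal is correct, but it takes a genuinely different route from the paper. You amplify the positive-probability conclusion of Theorem~\ref{thm:eternal} into an almost-sure statement through fluctuation theory: you show that the negative-drift walk $T_n=S_n-n(\mu+\eps/2)$ has $\sup_n T_n<\infty$ a.s., by introducing the strict ascending ladder epochs $\ell_i$ and proving $\p(\ell_i<\infty)=\theta^i$ with $\theta<1$ coming from Theorem~\ref{thm:eternal} applied to $-(X_i-\mu-\eps/2)$; all the individual steps check out (the inclusion $\{M=\infty\}\subseteq\bigcap_i\{\ell_i<\infty\}$, the bound $\theta<1$, and the reduction to the one-sided statement). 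The paper instead centers the variables, applies Theorem~\ref{thm:eternal} to $X_n+\eps$ to get $p:=\p(S_n/n>-\eps\text{ for all }n)>0$, and then upgrades to probability one by a hands-on zero--one-law argument: the event $\{S_n/n>-\eps\text{ eventually}\}$ does not depend on the first $N$ variables, so $\p(S_n/n>-\eps\text{ for all }n<N)\,\p(S_n/n>-\eps\text{ eventually})\ge p$, and letting $N\to\infty$ forces the eventual event to have probability $1$. The trade-off: your argument proves a stronger intermediate fact (a.s.\ finite supremum of a negative-drift walk, with geometrically decaying record probabilities), which is a classical and reusable piece of fluctuation theory, but it leans on the strong Markov property for i.i.d.\ sequences---the assertion that the increments after a finite stopping time form a fresh independent copy---which, to be fully rigorous in an elementary write-up, you should justify by decomposing over $\{\ell_i=m\}$ and using that this event is determined by $X_1,\ldots,X_m$. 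The paper's route avoids stopping times and that extra machinery entirely, which is why it is shorter and better suited to the note's stated aim of elementarity, while yours is quantitatively more informative.
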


\begin{proof}
We may assume that $\E[X_1] = 0$.
Given $\eps > 0$, let $X_n+\eps$ play the role of $X_n$ in
Theorem~\ref{thm:eternal}. Then we have
\[ \p(S_n+n\eps>0 \text{ for all $n\in\N$}) > 0, \]
which is equivalent to
\[ \p(S_n/n > -\eps \text{ for all $n\in\N$}) > 0. \]
Let $p$ denote the positive probability on the left side.
If we write ``eventually" to mean ``for all $n\ge M$ for some $M\in\N$,"
then for any $N \in \N$ we have
\[ \{S_n/n > -\eps \text{ eventually}\} =
\{(X_N+\cdots+X_n)/n > -\eps \text{ eventually}\}. \]
So, $\{S_n/n>-\eps\text{ for all }n<N\}$ and $\{S_n/n>-\eps \text{ eventually}\}$
are independent, and thus
\[
\p(S_n/n > -\eps \text{ for all }n<N)
\p(S_n/n > -\eps \text{ eventually}) \ge p.
\]
Letting $N\to\infty$ yields
\[ p\p(S_n/n>-\eps \text{ eventually}) \ge p. \]
Since $p > 0$, we have
\[ \p(S_n/n>-\eps \text{ eventually}) = 1. \]
The symmetrical argument yields
\[ \p(S_n/n < \eps \text{ eventually}) = 1. \]
Let $A_\eps$ and $B_\eps$ be the events in the previous two displays.
On $\bigcap_{k=1}^\infty (A_{1/k}\cap B_{1/k})$, we have $S_n/n \to 0$.
Since
\[ \p\biggl(\bigcap_{k=1}^\infty (A_{1/k}\cap B_{1/k})\biggr) = 1, \]
the proof is completed.
\end{proof}

%\begin{acknowledgment}{Acknowledgment.}
%The authors wish to thank the Greek polymath Anonymous, whose prolific works are an endless source of inspiration.
%\end{acknowledgment}

\end{document}